\newtheorem{theorem}{Theorem}
\newtheorem{lemma}[theorem]{Lemma}
\begin{document}
\title[Convex solutions]{An elliptic pde with convex solutions}
\author{Jon Warren}
\address{Department of Statistics, University of Warwick, Coventry CV4 7AL, UK}
\email{j.warren@warwick.ac.uk}
\date{}

\maketitle

\begin{abstract}Using a mixture of classical and probabilistic techniques we investigate the convexity of solutions to the elliptic pde associated with a certain generalized Ornstein-Uhlenbeck process.
\end{abstract}

\section{Introduction and results}

We study solutions to the elliptic partial differential equation  
\begin{equation}
\label{pde}
 \frac{1}{2} \sum_{i,j= 1}^d (\delta _{ij}+x_ix_j) \frac{\partial^2 u}{\partial x_i \partial  x_j} =c, \quad \quad x \in {\mathbf  R}^d,
\end{equation}
$c$ being an arbitrary constant.  This equation arose in a probabilitic context, \cite{warren}, studying $d$ particles moving in a stochastic flow, but with each experiencing an independent Brownian perturbation, The generator of the diffusion  describing the motion of this system of particles is the operator, which we will denote by ${\mathcal A}$, appearing on the lefthand side of \eqref{pde}. The purpose of this note is to prove the convexity of certain solutions to \eqref{pde} used in \cite{warren}. 

  We will consider solutions that grow  linearly as $ |x| \rightarrow \infty$ and  admit ``boundary values''
\begin{equation}
\label{boundary}
\frac{u (x)} {|x|} \rightarrow g( x/|x|)  \text{ as } |x| \rightarrow \infty
\end{equation}
where  function $g$ defined on the sphere $S^{d-1}=\{x \in {\mathbf R}^{d}:|x|=1\} $ satisfies $ \int g(\theta) d\theta=c/\gamma_d$, where $c$ is the constant appearing on the righthandside of \eqref{pde}, and 
\begin{equation}
\gamma_d=\frac{1}{\sqrt{\pi}}
\frac{ \Gamma(((d+1)/2)}{\Gamma(d/2)}.
\end{equation} 
 We will assume that the dimension $d \geq 2$. Here the integral over the sphere is taken with respect to Lebesgue measure normalised so  $ \int 1 d\theta=1$.

Our first  result is that the ``Dirichlet problem'' is solvable for continous boundary data, with convergence to the boundary values occurring uniformly. 
\begin{theorem}
\label{dirichlet}
Suppose that  $g \in C(S^{d-1})$  and let  $c= \gamma_d \int g(\theta) d\theta$
then there exists a unique solution to the p.d.e. \eqref{pde},  with $u(0) =0$ and such that 
\[
\lim_{r\rightarrow \infty} \sup_{\theta\in S^{d-1}} | u( r\theta)/r - g(\theta)| =0.
\]
\end{theorem}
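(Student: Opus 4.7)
The strategy is to exhaust ${\mathbf R}^d$ by balls $B_R$, solve the classical Dirichlet problem $\mathcal{A}u_R=c$ on each with boundary data $R\,g(\,\cdot\,/|\,\cdot\,|)$ on $\partial B_R$, extract a $C^{2,\alpha}_{\rm loc}$ subsequential limit, and then identify the boundary behavior at infinity using the probabilistic representation given by the diffusion $(X_t)$ with generator $\mathcal{A}$. This diffusion may be realized as $dX_t=dB_t+X_t\,dW_t$ with $B$ a $d$-dimensional Brownian motion and $W$ an independent scalar Brownian motion, since $\sigma\sigma^{\top}=I+xx^{\top}$. The operator $\mathcal{A}$ is uniformly elliptic on each compact set with smooth coefficients, so each finite-$R$ Dirichlet problem admits a unique classical solution.

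A first ingredient is a family of explicit radial barriers. The ansatz $u(x)=v_a(|x|)$ reduces \eqref{pde} (with $c=\gamma_d a$) to the linear ODE
\begin{equation*}
(1+r^2)\,v_a''(r)+\frac{d-1}{r}\,v_a'(r)=2\gamma_d a,
\end{equation*}
solvable explicitly by integrating factor. The elementary identity $\int_0^\infty s^{d-1}(1+s^2)^{-(d+1)/2}\,ds=1/(2\gamma_d)$ (via $s=\tan\phi$) then singles out the smooth $v_a$ with $v_a(0)=v_a'(0)=0$, and shows $v_a(r)/r\to a$ as $r\to\infty$. Comparison of $u_R$ with $v_{\sup g}$ and $v_{\inf g}$ on $B_R$, via the weak maximum principle for $\mathcal{A}$ on bounded domains, yields $|u_R(x)|\le C(1+|x|)$ uniformly in $R$. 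Interior Schauder estimates then extract a $C^{2,\alpha}_{\rm loc}$ limit $u$ of a subsequence of $u_R$, solving \eqref{pde}; subtracting a constant enforces $u(0)=0$.

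The hardest step is uniform attainment of the boundary values. With $v^{*}:=v_{c/\gamma_d}$ and $T_R$ the exit time of $X$ from $B_R$, Dynkin's formula applied to both $u_R$ and $v^{*}$ starting from $x$ gives
\begin{equation*}
u_R(x)-v^{*}(x)=R\,\mathbb{E}^x\!\left[g(X_{T_R}/R)-\tfrac{c}{\gamma_d}\right]+\bigl(v^{*}(R)-R\,\tfrac{c}{\gamma_d}\bigr),
\end{equation*}
and a direct computation from the ODE shows the last bracket is $O(\log R)$. The theorem thus reduces to showing that, uniformly in $\theta$,
\begin{equation*}
\lim_{r\to\infty}\frac{1}{r}\,\lim_{R\to\infty}R\,\mathbb{E}^{r\theta}\!\left[g(X_{T_R}/R)-\tfrac{c}{\gamma_d}\right]=g(\theta)-\tfrac{c}{\gamma_d}.
\end{equation*}
The plan here is to exploit the $SO(d)$-equivariance of the SDE: decompose $g$ into spherical harmonics and analyze, mode by mode, the exit distribution of $X$ on large spheres together with its fine dependence on the starting direction. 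This iterated-limit analysis, with uniformity in $\theta$, is the main obstacle and where the probabilistic structure earns its keep.

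Uniqueness reduces to showing that any $\mathcal{A}$-harmonic $w$ with $w(0)=0$ and $w(r\theta)/r\to 0$ uniformly must vanish. I would obtain this by the same probabilistic route: applying optional stopping at $T_R$, noting that $w(X_{T_R})/R\to 0$ uniformly, and invoking the exit-distribution estimate from the previous step to conclude that the representation $w(x)=\mathbb{E}^x[w(X_{T_R})]$ forces $w\equiv 0$ in the limit $R\to\infty$.
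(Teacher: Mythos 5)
Your proposal correctly locates the two pressure points of the theorem --- a locally uniform a priori bound on the approximating solutions, and uniform attainment of the boundary values at infinity --- but it closes neither, and the second is the heart of the matter. On the first point, comparison of $u_R$ with the radial barriers $v_{\sup g}$ and $v_{\inf g}$ cannot yield $|u_R(x)|\le C(1+|x|)$ uniformly in $R$: whichever barrier you use, the difference has boundary discrepancy of size $R\cdot\mathrm{osc}(g)$ on $\partial B_R$, and the weak maximum principle propagates that $O(R)$ error to every point of $B_R$, including the origin (note also that $\mathcal{A}(u_R-v_{\sup g})=c-\gamma_d\sup g\le 0$ has the wrong sign for an upper bound by boundary values). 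A bound that is $O(1)$ near the origin and grows linearly is a quantitative equidistribution statement for the exit direction of a recurrent diffusion; this is exactly the content of the paper's Lemma \ref{max}, which uses the singular radial $\mathcal{A}_R$-harmonic function $h(r)=\int_1^r\bigl((1+u^2)/u^2\bigr)^{(d-1)/2}du$ (so the outer sphere of an annulus is charged with probability $(h(|x|)-h(r))/(h(R)-h(r))$, of order $h(|x|)/R$) together with the interior estimate $\sup_{|x|\le 1}|u|\le K_1\sup|g|$ extracted from the spherical-harmonic series. On the second point, the paragraph beginning ``The plan here is to exploit the $SO(d)$-equivariance\dots'' is a statement of intent, not a proof: you yourself call the iterated limit with uniformity in $\theta$ ``the main obstacle'' and then stop. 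The paper resolves it in two steps: (i) each radial mode is solved explicitly in terms of ${}_2F_1$ (Lemma \ref{sol}), with $f_l(r)/r\to1$, which settles the boundary limit when $g$ is a finite linear combination of spherical harmonics; and (ii) the passage to general $g\in C(S^{d-1})$ is made by Stone--Weierstrass together with Lemma \ref{max} applied to the mean-zero difference $g-g_\epsilon$. Without an analogue of (ii), a mode-by-mode analysis cannot be summed with the required uniformity in $\theta$.

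A smaller but genuine defect is the uniqueness argument. Stopping the $\mathcal{A}$-harmonic difference $w=u_1-u_2$ at the exit time of the ball $B_R$ gives $w(x)={\mathbf E}^x[w(X_{T_R})]$ with $|w|\le o(R)$ on $\partial B_R$, hence only $|w(x)|\le o(R)$, which is not small. One must stop at the exit of the annulus $\{r<|y|<R\}$, use that the outer sphere is reached first only with probability $(h(|x|)-h(r))/(h(R)-h(r))=O(1/R)$ so that its $o(R)$ contribution vanishes as $R\to\infty$, and then let $r\downarrow 0$ using $w(0)=0$; this is precisely how the paper concludes. In short, your scaffolding (radial solutions, the linear SDE, spherical harmonics) matches the paper's, but the proposal as written is missing the quantitative maximum principle and the explicit radial asymptotics that make the limit at infinity actually go through.
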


Taking the constant $c$ to be zero, this result looks at first sight as if it might be related to a Martin boundary result for the operator ${\mathcal A}$, But in fact the corresponding diffusion process is recurrent, and the only positive solutions to ${\mathcal A}u=0$ on ${\mathbf R}^d$ are  the constant solutions. Thus the Martin boundary consists of a single point at infinity, not a sphere.

It seems plausible that one could transform equation \eqref{pde} into an elliptic equation on the ball $\Omega=\{ x \in {\mathbf R}^d: |x|\leq 1\}$ with  $g$ becoming the boundary data on $\partial \Omega$, and then deduce Theorem 1 from standard results on the Dirichlet boundary problem for such equations, as described in \cite{gil-trud}. However if this were to work,  then there would have to be some solutioon corresponding to $g$ being identically constant, and no such solution to \eqref{pde} and \eqref{boundary} with $c$=0 exists.  Instead our strategy for proving Theorem 1 is to take advantage of the spherical symmetry of the opertator ${\mathcal A}$ to write a series expansion for solutions involving speherical harmonic functions. This evidently associates to any function $g$ defined on the sphere the  appropriate solution of equation \eqref{pde}. Then the more delicate part of the argument proves the uniform convergence of the solution to the boundary data making use of an  appropriate analogue of the maximum principle in the context of linear growth at infinity.

Convexity of the solutions to elliptic partial differential equations has been studied a great deal in the literature, see for example, \cite{kawohl} and \cite{korevaar}.  Here we will follow one of the established approaches to proving convexity:  making use of the fact the corresponding parabolic equation is convexity preserving. General conditions are known, see \cite{lions-musiela} and \cite{janson-tysk} that  ensure this. However in our problem we can see directly that the semigroup generated by ${\mathcal A}$ preserves convexity because the associated diffusion process can be extended to a stochastic flow of affine maps. Then to complete the argument for proving the following result we must show convergence of the solution to the parabolic equation to that of the elliptic boundary value  problem.

\begin{theorem}
Suppose that  $g \in C(S^{d-1})$   and $u \in C^2({\mathbf R}^d)$ is the solution to elliptic boundary problem \eqref{pde} and \eqref{boundary} with $u(0)=0$.  Then $u$ is convex if and only if   $v \in C({\mathbf R}^d) $  given by
\[
v(x)= |x|g(x/|x|)   \quad x \in {\mathbf R}^d,
\]
is convex also. 
\end{theorem}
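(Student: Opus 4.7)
\emph{Easy direction.} If $u$ is convex, the rescalings $u_\lambda(x):=u(\lambda x)/\lambda$ are convex for every $\lambda>0$, and the boundary condition \eqref{boundary} together with $u(0)=0$ yields $u_\lambda(x)\to |x|g(x/|x|)=v(x)$ pointwise on $\mathbb{R}^d$ as $\lambda\to\infty$. Pointwise limits of convex functions are convex, so $v$ is convex.

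\emph{Parabolic setup for the converse.} Assume $v$ convex. The operator $\mathcal{A}$ generates the SDE $dX_t=dB_t+X_t\,dW_t$, where $B$ is a $d$-dimensional Brownian motion independent of a one-dimensional Brownian motion $W$. Since the coefficients are linear in $X$, the flow is affine:
\begin{equation*}
X_t^x \;=\; e^{W_t-t/2}\,x \;+\; \int_0^t e^{W_t-W_s-(t-s)/2}\,dB_s.
\end{equation*}
Define $w(t,x):=\mathbb{E}[v(X_t^x)]$; the expectation is finite by the linear growth of $v$ and a routine moment estimate. For every realization of $(B,W)$ the map $x\mapsto v(X_t^x)$ is the composition of the convex function $v$ with an affine map and hence convex, so averaging preserves convexity and $w(t,\cdot)$ is convex for all $t\ge 0$; moreover $w$ solves $\partial_t w=\mathcal{A}w$ with $w(0,\cdot)=v$. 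Applying It\^o/Dynkin to $u$ along the flow, using $\mathcal{A}u=c$, yields $\mathbb{E}[u(X_t^x)]=u(x)+ct$, so
\begin{equation*}
w(t,x)-ct-u(x)\;=\;\mathbb{E}\bigl[(v-u)(X_t^x)\bigr].
\end{equation*}
Once the right-hand side is shown to tend to $0$ as $t\to\infty$, $u$ is exhibited as a pointwise limit of the convex functions $x\mapsto w(t,x)-ct$, and convexity follows.

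\emph{Main obstacle.} The decisive step is the convergence $\mathbb{E}[(v-u)(X_t^x)]\to 0$. Theorem 1 supplies $(v-u)(y)/(1+|y|)\to 0$ as $|y|\to\infty$, but $\mathbb{E}[|X_t^x|^2]$ grows like $e^t$, so bare $L^p$ estimates cannot deliver the required decay. The plan is to exploit the affine representation of the flow: show that the angular process $X_t^x/|X_t^x|$ concentrates on a deterministic distribution on $S^{d-1}$ while the radial process $|X_t^x|$ blows up, and couple this with a uniform integrability bound for the sublinear quantity $(v-u)(X_t^x)/|X_t^x|$ to pass to the limit. Securing these two facts simultaneously---angular concentration together with integrable sublinear decay against the exponentially spreading radial part---is where I expect the bulk of the technical work to lie. (A softer contingency, should the direct argument prove awkward, is to extract subsequential limits of the convex normalisations $x\mapsto w(t_n,x)-w(t_n,0)$, verify via equicontinuity that any limit solves \eqref{pde}--\eqref{boundary} with value $0$ at the origin, and identify it with $u$ using uniqueness from Theorem 1.)
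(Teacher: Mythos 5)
Your easy direction is correct and is essentially the paper's own argument: the paper passes to the limit $r\to\infty$ directly in the inequality $\alpha u(rx)+(1-\alpha)u(ry)\ge u(r(\alpha x+(1-\alpha)y))$ rather than phrasing it through the rescalings $u_\lambda$, but this is the same computation. Your setup for the converse --- the affine stochastic flow, the observation that $P_t$ therefore preserves convexity, and the identity $P_tv(x)-ct-u(x)=\mathbf{E}[(v-u)(X^x(t))]$ --- is also exactly the paper's strategy.

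The gap is that the step you yourself flag as decisive is not proved, and the plan you sketch for it rests on a wrong picture of the dynamics. The radial process $|X^x(t)|$ does not blow up: the diffusion is recurrent, and $X^x(t)$ converges in law (indeed, after the time reversal \eqref{coninv}, almost surely) to the invariant random variable $\int_0^\infty e^{B(s)-s/2}dW(s)$, whose modulus has a stable law of index $1/2$ type with infinite mean. The growth $\mathbf{E}[v(X^x(t))]\sim ct$ is produced entirely by the heavy tail of the law of $X^x(t)$, not by almost-sure escape to infinity, so ``angular concentration against a blowing-up radial part'' is not available. Relatedly, $\mathbf{E}[(v-u)(X^x(t))]$ does not tend to $0$ but to a constant $b=\lambda_d\int g\,d\theta$ (harmless for convexity, but a symptom of the same misreading). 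The paper closes this step by separation of variables: the skew-product decomposition gives $P_tv(r\theta)=\sum_l f_l(t,r)g_l(\theta)$ with $f_l(t,r)=\mathbf{E}\bigl[R^{(r)}(t)\exp\bigl(-\tfrac{l(l+d-2)}{2}\int_0^tR^{(r)}(s)^{-2}ds\bigr)\bigr]$; for $l\ge1$ recurrence forces $\int_0^\infty R^{(r)}(s)^{-2}ds=\infty$ a.s., so the exponential factor vanishes and a sandwich argument yields $f_l(t,r)\to f_l(r)$, while for $l=0$ one writes $r=f_0(r)+\hat f_0(r)$ with $\hat f_0$ of logarithmic growth and uses convergence in $L^p$, $p<1$, to the invariant measure (Lemma \ref{lemma1con}). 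The resulting convergence of $P_tv-ct-b$ to $u$ is only in $L^2$ of each sphere (Lemma \ref{l2con}), but that suffices: it gives the convexity inequality almost everywhere, and continuity of $u$ upgrades it to everywhere. Your fallback via subsequential limits and uniqueness from Theorem 1 could in principle work, but it requires exactly the uniform-in-$t$ control of the boundary behaviour \eqref{boundary} that is the hard point, so it does not evade the difficulty.
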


\section{Separation of variables and properties of the radial equation}

We may rewrite the operator ${\mathcal A}$ in spherical co-ordinates as
\begin{multline}
\label{sphere1}
{\mathcal A}= \frac{r^2}{2}\frac{\partial^2}{\partial r^2}  +\frac{1}{2}
\nabla^2 = \\
\frac{1}{2}(1+r^2 )\frac{\partial^2}{\partial r^2}+ \frac{(d-1)}{2r}\frac{\partial}{\partial r} +\frac{1}{2 r^{2}}\Delta_{S^{d-1}}= {\mathcal A}_{R}+\frac{1}{2 r^{2}}\Delta_{S^{d-1}},
\end{multline}
where $\Delta_{S^{d-1}}$ is the  Laplace-Beltrami  operator on the sphere $S^{d-1}$. The evident spherical symmetry suggests a solution by the separation of variables, taking the form 
\begin{equation}
\label{series}
u(x)=u(r\theta)= \sum_{l\geq 0} f_l ( r ) g_l(\theta).
\end{equation}
Suppose that  $g \in L^2(S^{d-1})$ and  take $g_l$ to be  the   projection in $L^2(S^{d-1})$ of $g$ onto the space of spherical harmonic functions of degree $l$, see \cite{m}. Then $g_l$ satisfies 
\begin{equation}
\Delta_{S^{d-1}} g_l = -l(l+d-2) g_l,
\end{equation}
and consequently for $l \geq 1$, we would like     $f_l$ to  solve
\begin{equation}
\label{radialDE}
{\mathcal A}_{R}f_l-\frac{l(l+d-2)}{2r^2}f_l=0
\end{equation}
with $f_l(r)/r  \rightarrow 1$  as $r \rightarrow  \infty$ and $f_l(0+)=0$.  In fact such $f_l$ may be expressed in
terms of hypergeometric functions, see Lemma \ref{sol}.

For $l=0$  we define $f_l$ differently, one reason for this being that non-constant solutions to \eqref{radialDE} with $l=0$ all have a singularity at the origin.  Instead we take $f_0$ to  solve
\begin{equation}
\label{radialDE0}
{\mathcal A}_{R}f_0=\gamma_d
\end{equation}
with $f_0(r)/r  \rightarrow 1$ as  $r \rightarrow  \infty$ and $f_0(0+)=0$. 
This has a solution
\begin{equation}
\label{exact0}
f_0( r )= 2\gamma_d \int_0^r \left(\frac{u^2}{1+u^2}\right)^{-(d-1)/2} \int_0^u
\frac{v^{d-1}}{(1+v^2)^{(d+1)/2}} dv du
\end{equation}
which may  be verified  by simple calculus, noting that
\[
\int_0^\infty
\frac{v^{d-1}}{(1+v^2)^{(d+1)/2}} dv = \frac{1}{2\gamma_d}.
\] 

Using Euler's integral representation of the hypergeometric function
it is straightforward to check, see Lemma \ref{sol}, that $f_l( r )$  decays  to $0$
geometrically fast  for $r$ in compact sets as $l$ tends to infinity. On the otherhand, $g_l(\theta) $ grows at most polynomially as $ l$ tends to
infinity, as can be seen from the integral representation for $g_l$ ( page 42, \cite{m}). In conjunction these facts  guarantee that the series \eqref{series} converges uniformly 
 on compact sets of ${\mathbf R}^d$  and does indeed define a smooth
 solution to  ${\mathcal A} u =c$ except possibly at the origin.  But since $\{0\}$ is a polar set for the diffusion associated with ${\mathcal A}$,  any bounded solution to ${\mathcal A} u =c$ in the  punctured ball $\{ x \in {\mathbf R}^d: 0 <|x|<1\}$ extends to a solution on the entire ball, and so  \eqref{series} defines a solution on all of ${\mathbf R}^d$.

\begin{lemma}
\label{sol}
The solution to 
\[
{\mathcal A}_{R} f -\tfrac{l(l+d-2)}{2r^2}f=0,
\]
satisfying boundary conditions $f(0)=0$ and $f(r)/r \rightarrow 1$ as $ r \rightarrow \infty$ is 
\[
f( r )= f_l( r)= r^l \frac{\Gamma( (l+d+1)/2 )\Gamma(l/2)}{\Gamma(l+d/2)\Gamma(1/2)}\;_2F_1(l/2,(l-1)/2;l+d/2;-r^2)
\]
Moreover for each $R>0$, there exists $\delta_R \in (0,1)$ so that
\[
\sup_{r \leq R} f_l( r ) \leq \delta_R^l \text{ for all sufficiently large l}.
\]  
\end{lemma}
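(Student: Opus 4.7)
The plan is to reduce the radial equation to a standard hypergeometric ODE, read off the solution regular at $r=0$ up to a multiplicative constant, fix that constant from the asymptotic behaviour at infinity, and finally obtain the geometric decay from an Euler integral bound in which the gamma-function normalisation cancels cleanly. Multiplying the equation by $2$ puts it in the form
\[
(1+r^{2})f''+\frac{d-1}{r}f'-\frac{l(l+d-2)}{r^{2}}f=0,
\]
whose indicial exponents at the regular singular point $r=0$ are $l$ and $-(l+d-2)$, so $f(0)=0$ selects the Frobenius branch $f(r)=r^{l}F(r^{2})$. Substituting this ansatz and passing to the variable $t=-r^{2}$ reduces the equation to a Gauss hypergeometric equation, and matching coefficients identifies the parameters $a=l/2$, $b=(l-1)/2$, $c=l+d/2$. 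The branch analytic at $t=0$ is $\,{}_{2}F_{1}(a,b;c;t)$, so $f(r)=A\,r^{l}\,{}_{2}F_{1}(l/2,(l-1)/2;l+d/2;-r^{2})$ for a constant $A$ yet to be fixed.

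To determine $A$, I would invoke the standard connection formula for $\,{}_{2}F_{1}$ at infinity, which writes $\,{}_{2}F_{1}(a,b;c;-r^{2})$ as a sum of two tails of orders $r^{-2a}=r^{-l}$ and $r^{-2b}=r^{1-l}$. The second dominates after multiplication by $r^{l}$, giving
\[
r^{l}\,{}_{2}F_{1}(l/2,(l-1)/2;l+d/2;-r^{2})\;\sim\;\frac{\Gamma(l+d/2)\Gamma(1/2)}{\Gamma(l/2)\Gamma((l+d+1)/2)}\,r
\]
as $r\to\infty$, and imposing $f(r)/r\to 1$ forces $A$ to equal the reciprocal of this prefactor, reproducing the formula in the statement.

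For the decay, I would apply Euler's integral representation with the parameter $b=l/2$ (using the symmetry of $\,{}_{2}F_{1}$) to write
\[
r^{l}\,{}_{2}F_{1}(l/2,(l-1)/2;l+d/2;-r^{2})=\frac{\Gamma(l+d/2)\,r}{\Gamma(l/2)\Gamma((l+d)/2)}\int_{0}^{1}t^{-1/2}(1-t)^{(l+d)/2-1}\left(\frac{tr^{2}}{1+tr^{2}}\right)^{(l-1)/2}dt.
\]
For $r\le R$ and $t\in[0,1]$ the bracketed quantity is bounded by $\eta:=R^{2}/(1+R^{2})<1$, while the remaining Beta integral evaluates to $\Gamma(1/2)\Gamma((l+d)/2)/\Gamma((l+d+1)/2)$. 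Multiplying through by the normalising constant $A$, the gamma factors collapse perfectly, leaving $f_{l}(r)\le r\,\eta^{(l-1)/2}\le (R/\sqrt{\eta})(\sqrt{\eta})^{l}$, and any $\delta_{R}\in(\sqrt{\eta},1)$ then absorbs the constant for large $l$. The only slightly delicate step is identifying which of the two terms in the connection formula dominates and using the Euler representation in the form above (swapping the roles of $a$ and $b$) so that the gamma prefactor cancels $A$ exactly; the rest is routine checking of coefficients.
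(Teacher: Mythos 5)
Your argument is correct and follows the paper's overall route: reduce to the Gauss hypergeometric equation with $a=l/2$, $b=(l-1)/2$, $c=l+d/2$, fix the normalising constant from the behaviour at infinity, and get the geometric decay from Euler's integral. The only genuine difference is in the two technical steps, and in both cases your variant is at least as clean. For the constant, you quote the full connection formula at $z=\infty$ and keep the dominant $(-z)^{-b}$ term, whereas the paper extracts the same coefficient $\Gamma(c)\Gamma(a-b)/(\Gamma(a)\Gamma(c-b))$ via Pfaff's transformation plus Gauss's summation; these are the same computation. For the decay, you apply Euler's representation with the roles of $a$ and $b$ exchanged (integrating against $t^{l/2-1}$ rather than $t^{(l-3)/2}$), which makes the gamma prefactor cancel the normalisation exactly and yields the sharp pointwise bound $f_l(r)\le r\,\bigl(R^2/(1+R^2)\bigr)^{(l-1)/2}$ with no residual gamma ratio to absorb. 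This is actually preferable to the paper's version: there the integrand is bounded by $\sup_{0\le t\le 1}\bigl((1-t)/(1+r^2t)\bigr)^{l/2}$, which is claimed to be at most $(1+r^2)^{-l/2}$ but in fact equals $1$ (the supremum is attained at $t=0$); the paper's estimate is repaired by grouping $t$, not $1-t$, with $(1+r^2t)^{-1}$, since $t/(1+r^2t)$ is increasing with supremum $1/(1+r^2)$, after which the stated conclusion with $\delta_R>R/\sqrt{1+R^2}$ follows as in your write-up. One small caveat: your Euler representation needs $\Re(b)>0$, which holds for your choice $b=l/2$ for all $l\ge 1$ (the paper's choice degenerates at $l=1$), so no further care is needed.
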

\begin{proof} Substituting $f( r )= r^l y(-r^2)$ and $x=-r^2$ into 
\[
\tfrac{1}{2} (1+r^2) f^{\prime\prime} +\tfrac{d-1}{2} f^\prime -\tfrac{l(l+d-2)}{2r^2}f=0
\]
gives 
\[
x(1-x) y^{\prime\prime} + \{l+\tfrac{d}{2}-x(l+\tfrac{1}{2} \} y^\prime- \tfrac{l(l-1)}{4} y=0,
\]
which is the standard form of the hypergeometric equation with parameters $a=l/2$, $b=(l-1)/2$ and $c=l+d/2$. The boundary condition $f(0)=0$ is satisfied by taking $y(x)$ proportional to $_2F_1(a,b;c;x)$. Now  to choose the constant of proportionality to get the behaviour as $r \rightarrow \infty$ correct we combine Pfaff's transformation with Gauss's formula for $_2F_1(a,b;c;1)$ to deduce that
\[
\lim_{x \rightarrow - \infty} (1-x)^b \;_2F_1(a,b;c;x) = \;_2F_1(c-a,b;c;1)= \frac{\Gamma( c )\Gamma(a-b)}{\Gamma(c-b)\Gamma(a)}.
\]
Next using Euler's integral representation for the hypergeometric function
\[
f_l( r )= r^l \frac{\Gamma(l/2)}{\Gamma((l-1)/2)\Gamma(1/2)} \int_0^1 t^{(l-3)/2}(1-t)^{(d+l-1)/2} (1+r^2t)^{-l/2}dt.
\]
Now the ratio of gamma functions appearing here grows sublinearly with $l$, whereas we can estimate the integral as being less than
\[
\sup_{0\leq t \leq 1} \left(\frac{ 1-t}{1+r^2t} \right)^{l/2} \leq \left( \frac{1}{1+r^2}\right)^{l/2}.
\]
Consequently the statement of the lemma holds choosing $\delta_R>R/\sqrt{1+R^2}$.
\end{proof}

\section{The associated diffusion}

Associated with the operator ${\mathcal A}$ is a diffusion and we will make use of this to study solutions of \eqref{pde}. In fact the SDE corresponding to $ {\mathcal A}$ is linear, and consequently the  diffusion can be constructed explicitly as in the following lemma.
 Of particular importance is that this representation of the diffusion actually defines a stochastic flow of affine maps of ${\mathbf R}^d$.

\begin{lemma}
\label{yflow}
Let $B$ be a standard one dimensional Brownian motion, and $W$ a
standard Browninan motion in ${\mathbf R}^{d}$. For 
$x\in {\mathbf R}^d$, let 
\begin{equation}
\label{defX}
X^x(t)= x\exp\{B(t)-t/2\} + \int_0^t \exp\{(B(t)-B(s))-(t-s)/2\}dW(s)
\end{equation}
then $\bigl( X^x(t); t \geq 0)$ is a diffusion with generator ${\mathcal  A}$ starting from $x$. 
\end{lemma}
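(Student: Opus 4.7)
My plan is to recognize the formula for $X^x(t)$ as the explicit solution of the linear stochastic differential equation
\[
dX(t) = X(t)\, dB(t) + dW(t), \quad X(0) = x,
\]
and then verify, via Itô's formula, that any solution to this SDE is a diffusion whose generator coincides with $\mathcal A$. The recognition of $\mathcal A$ proceeds by writing the diffusion coefficient matrix $a(x)_{ij} = \delta_{ij} + x_i x_j$ as $\sigma(x)\sigma(x)^{T}$, where $\sigma(x)$ is the $d \times (d+1)$ matrix whose first $d$ columns form the identity and whose final column is $x$; this factorization corresponds precisely to driving the SDE by the independent Brownian motions $W$ (in $\mathbf R^d$) and $B$ (in $\mathbf R$).

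The key verification step is to apply Itô's formula to a $C^2$ function $u$ of a solution $X$ of the SDE. Since there is no drift in the SDE, this yields
\[
du(X(t)) = \sum_i \partial_i u(X(t)) \, dW_i(t) + \sum_i \partial_i u(X(t)) X_i(t) \, dB(t) + \tfrac{1}{2} \sum_{i,j} (\delta_{ij} + X_i(t)X_j(t)) \partial_{ij} u(X(t)) \, dt,
\]
identifying the bounded-variation part as $\mathcal A u(X(t))\,dt$ and hence confirming that any solution is a diffusion with generator $\mathcal A$.

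It then remains to verify that the explicit expression \eqref{defX} for $X^x$ solves the SDE. This is a standard variation-of-parameters computation for a linear SDE: set $Y(t) = \exp(B(t) - t/2)$, which by Itô satisfies $dY = Y\, dB$, and check that
\[
X^x(t) = Y(t) x + Y(t) \int_0^t Y(s)^{-1} dW(s)
\]
(which is the form \eqref{defX} takes after simplifying the exponentials) satisfies the SDE by applying Itô's product rule to $Y(t) \cdot (x + \int_0^t Y(s)^{-1} dW(s))$. The initial condition $X^x(0) = x$ is immediate.

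I do not expect a serious obstacle: the SDE is globally Lipschitz (linear) so uniqueness and existence are standard, and the computations are routine Itô calculus. The only point requiring even a moment's thought is the decomposition $a(x) = I + xx^{T}$ into $\sigma\sigma^{T}$, which makes transparent why exactly one additional one-dimensional Brownian motion $B$ (beyond the $d$-dimensional $W$) suffices and why its coefficient is the position $x$ itself, giving the flow its affine character.
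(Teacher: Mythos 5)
Your proposal is correct and is essentially the paper's own argument: the paper's entire proof is the single sentence ``This follows by applying It\^{o}'s formula to $X^x$,'' and your write-up simply supplies the routine details (the factorization $\delta_{ij}+x_ix_j=(\sigma\sigma^{T})_{ij}$ and the variation-of-parameters check that \eqref{defX} solves $dX=X\,dB+dW$), all of which are accurate. No further comment is needed.
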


\begin{proof}
This follows by applying  It\^{o}'s formula to $X^x$.
\end{proof}
It is easy to see from this lemma that the diffusion is recurrent rather than transient.  Indeed we have for every $x \in {\mathbf R}^d$, as $t\rightarrow \infty$,
\begin{multline}
\label{coninv}
X^x(t)=x\exp\{B(t)-t/2\} + \int_0^t \exp\{(B(t)-B(s))-(t-s)/2\}dW(s) \stackrel{\text{law}}{=} \\
x\exp\{B(t)-t/2\} + \int_0^t \exp\{B(s)-s/2\}dW(s)
 \stackrel{\text{a.s.}}{\rightarrow} \int_0^\infty \exp\{B(s) -s/2\}dW(s),
\end{multline}
where the  last stochastic integral is almost surely convergent because its quadratic variation is almost surely finite. It is the fact the associated diffusion is not transient that makes the treatment of the Dirichlet problem for ${\mathcal A}$  somewhat non-standard.

The process $X^x$  defined by \eqref{defX} is an example of a generalized Ornstein-Uhlenbeck process. See \cite{cpy} for  general discussion of these processes and in particular their invariant measures. The particular case of the generalized OU process constructed from two one-dimensional Brownian motions, which corresponds to \eqref{defX} with $d=1$,   was studied in \cite{yor}. There is a close relationship between the  generalized OU processes and exponential functionals of L\'{e}vy  processes,  in our case,  exponential functionals of Brownian motion. These  have been are extensively studied, see the survey article, \cite{my}.
In particular we will have need of the folowing observations. The invariant measure given at \eqref{coninv} can be re-written in the form
\begin{equation}
\int_0^\infty \exp\{B(s) -s/2\}dW(s)  \stackrel{\text{law}}{=} W \bigl( A_\infty \bigr) \stackrel{\text{law}}{=}
\sqrt{ A_\infty} W(1),
\end{equation}
where $A_\infty$ denotes the exponential functional $\int_0^\infty \exp\{2B(s) -s\} ds$. The distribution of this latter random variable is known to be a stable distribution of index $1/2$, see \cite{dufresne},  also  Theorem 6.2 of \cite{my},  and consequently ${\mathbf E} [ A_\infty^{p/2}]$ is finite for $0<p<1$ and infinite if $p=1$. It follows that if $X(\infty)$ is a ${\mathbf R}^d$ valued  random variable whose distribution is the invariant measure at \eqref{coninv}, then,
\begin{equation}
{\mathbf E} [ |X(\infty)|^{p}]< \infty \text{ for $p<1$, and } {\mathbf E} [ |X(\infty)|]=\infty.
\end{equation}
Moreover, the convergence at \eqref{coninv} occurs in $L^p$ for every $p<1$.
On the otherhand for every finite time $t<\infty$ we have
\begin{equation}
\label{sqfinite}
 {\mathbf E} [ |X^x(t)|^2]<\infty.
\end{equation}

\section{Proof of Theorem 1}

In order to prove Theorem  \ref{dirichlet} we must  show that the solution $u$, given by the series \eqref{series}, has the  correct boundary behaviour. If $g$ is a finite linear combination of spherical harmonic functions then this follows immediately from the asymptotic behaviour of $f_l$. However in general it is more difficult to verify the limit behaviour of $u$. The key tool we use is the following result which plays the role of a maximum principle in our setting.

\begin{lemma}
\label{max}
There exists a constant $K$ such that for every  $g\in C(S^{d-1}) $ satisfying $ \int_{S^{d-1}} g d\theta=0$   the  function  $u$ given by \eqref{series} and corresponding to $g$ satisfies 
\[
|u(x)| \leq K(1+|x|) \sup_{\theta \in S^{d-1}} |g(\theta)|  \qquad \text{ for all } x \in {\mathbf R}^d. 
\]
\end{lemma}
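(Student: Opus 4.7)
My strategy is to combine the probabilistic maximum principle furnished by the diffusion $X^x$ with a uniform estimate of $u$ on spheres expressed via a Poisson-type kernel built from the radial eigenfunctions $f_l$ and the reproducing kernels on spherical harmonics.

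Because $\int g\, d\theta = 0$, the constant $c$ in \eqref{pde} vanishes, so $u$ is $\mathcal{A}$-harmonic. The series \eqref{series} together with Lemma \ref{sol} furnishes a linear growth bound $|u(y)| \leq C(1+|y|)$, which combined with \eqref{sqfinite} upgrades $u(X^x(t))$ from a local martingale (by It\^{o}'s formula) to an honest martingale. Setting $\tau_R := \inf\{t : |X^x(t)| = R\}$, which is finite a.s.\ by recurrence, optional stopping applied to the bounded stopped martingale $u(X^x(t \wedge \tau_R))$ yields $u(x) = \mathbb{E}[u(X^x(\tau_R))]$ for $|x| < R$, and therefore $|u(x)| \leq \sup_{|y|=R}|u(y)|$. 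The problem reduces to showing that
\[
\sup_{|y|=R} |u(y)| \leq K(1+R)\|g\|_{\infty}
\]
uniformly in $R$.

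For this I would express $u$ on the sphere of radius $R$ as an integral transform of $g$:
\[
u(R\theta) = \int_{S^{d-1}} P_R(\theta, \eta)\, g(\eta)\, d\eta,
\qquad
P_R(\theta, \eta) = \sum_{l \geq 1} f_l(R)\, K_l(\theta, \eta),
\]
where $K_l$ denotes the reproducing kernel of the space of degree-$l$ spherical harmonics. Then $|u(R\theta)| \leq \|g\|_{\infty}\,\|P_R(\theta,\cdot)\|_{L^1(S^{d-1})}$, so it suffices to show $\|P_R(\theta,\cdot)\|_{L^1} \leq K(1+R)$ uniformly in $\theta$ and $R$. The natural route is to exploit the hypergeometric representation of $f_l$ from Lemma \ref{sol}, rewritten via Pfaff's transformation, alongside the Gegenbauer generating function $\sum_l r^l K_l(\theta, \eta)$, which is the classical Poisson kernel of $L^1$-norm one. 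The goal is to identify $f_l(R)/R$ as a moment sequence $\int_0^1 r^l\, d\mu_R(r)$ against a probability measure $\mu_R$ on $[0,1]$, and thereby represent $P_R/R$ as a superposition of classical Poisson-type kernels (with their mean components removed) whose $L^1$-norms are all bounded by $2$.

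The main obstacle is precisely this kernel estimate. The kernel $P_R$ is signed (its $\eta$-integral vanishes since the $l=0$ term has been removed), so positivity arguments fail directly; moreover the competing asymptotics of $f_l(R)$---linear growth in $R$ for $l \lesssim R$ and the geometric decay in $l$ for $l \gg R$ supplied by Lemma \ref{sol}---must be interpolated uniformly in $R$. If the moment-representation of $f_l(R)/R$ alluded to above can be pushed through, the $L^1$ bound follows at once; otherwise one must argue more delicately via Abel-type summation and asymptotic estimates on Gegenbauer polynomials to obtain the linear-in-$R$ bound rather than a quadratic one.
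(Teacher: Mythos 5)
There is a genuine gap. Your opening reduction via optional stopping at the single exit time $\tau_R$ is circular: the inequality $|u(x)|\leq \sup_{|y|=R}|u(y)|$ reduces the lemma to showing $\sup_{|y|=R}|u(y)|\leq K(1+R)\|g\|_\infty$ uniformly in $R$, which \emph{is} the lemma (take $R=|x|$), so the martingale step buys nothing and the entire burden falls on the kernel estimate $\|P_R(\theta,\cdot)\|_{L^1}\leq K(1+R)$ --- which you do not prove, but only sketch a hoped-for mechanism for. Worse, that mechanism fails as stated: since $x\mapsto x_i$ is exactly ${\mathcal A}$-harmonic, one has $f_1(r)=r$ identically, so $f_1(R)/R=1$; a probability measure $\mu_R$ on $[0,1]$ with $\int_0^1 r\,d\mu_R(r)=1$ must be $\delta_1$, and then all its moments equal $1$, contradicting the geometric decay of $f_l(R)/R$ in $l$ from Lemma \ref{sol}. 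So $f_l(R)/R$ is not a moment sequence of a probability measure on $[0,1]$, and the ``superposition of Poisson kernels'' route collapses. (Also, your claim that the series \eqref{series} plus Lemma \ref{sol} already gives $|u(y)|\leq C(1+|y|)$ is unjustified: that lemma only yields a bound $K_R\|g\|_\infty$ on each ball with no control of $K_R$ in $R$ --- indeed $\delta_R\to 1$ --- and a linear-in-$R$ bound is precisely what is being proved.)

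The paper avoids all of this with a two-sided exit. It computes the scale function $h$ of the radial generator ${\mathcal A}_R$, so that for the exit time $\tau_{1,R}$ from the annulus $1<|y|<R$ the probability of leaving through the outer sphere is exactly $\bigl(h(|x|)-h(1)\bigr)/\bigl(h(R)-h(1)\bigr)\sim |x|/R$. This small hitting probability precisely cancels the linear growth of $u$ on $\{|y|=R\}$, giving $|u(x)|\leq \sup_{|y|=1}|u(y)|+h(|x|)\sup_\theta|g(\theta)|$ in the limit $R\to\infty$. Crucially, the boundary behaviour $u(R\theta)/R\to g(\theta)$ needed to pass to that limit is only invoked for $g$ a \emph{finite} linear combination of spherical harmonics, where it is immediate from the asymptotics of finitely many $f_l$; the general case then follows by density together with the local continuity estimate \eqref{cts}. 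If you want to salvage your approach you would need an actual proof of the uniform $L^1$ kernel bound; as written, the central estimate is conjectured rather than established.
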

 
Admitting this result we can prove the covergence statement of  Theorem \ref{dirichlet} as follows. Fix an arbitrary $g \in C(S^{d-1})$.  Finite linear combinations of spherical harmonics are dense in $C(S^{d-1})$ by the Stone-Weierstrass Theorem, and hence given any $ \epsilon>0$ we can find $g_\epsilon$, a finite linear combination of spherical harmonics, satisfying  $ \int_{S^{d-1}} g_\epsilon d\theta= \int_{S^{d-1}} g d\theta$ and with
\[
|| g_\epsilon -g ||_\infty \leq \epsilon.
\]
But then if $u_\epsilon$ is the  solution to \eqref{pde} which corresponds to $g_epsilon$ given by a finite series of the form \eqref{series},  as we have remarked already,
\[
\lim_{r\rightarrow \infty } \sup_{\theta \in S^{d-1}} | u_\epsilon( r\theta)/r - g_\epsilon (\theta)| =0.
\]
Now $u-u_\epsilon$ corresponds to $g-g_\epsilon$, which has mean $0$, and applying the previous lemma to this  we obtain
\[
|u(x)- u_\epsilon(x)| \leq K\epsilon( 1+ |x|),
\]
and hence  
\[
\limsup_{r\rightarrow \infty} \sup_{\theta\in S^{d-1}} | u( r \theta)/r - g(\theta)| \leq (K+1)\epsilon.
\]
Since $\epsilon$ is arbitrary this proves the desired uniform convergence.

\begin{proof}[Proof of Lemma \ref{max}]
We begin by  solving the equation ${\mathcal A}_{R} h =0 $.  By elementary means  we find that the general solution is a linear combination of a constant and the function
\begin{equation}
h( r )= \int_1^r \left(\frac{1+u^2}{u^2} \right) ^{(d-1)/2} du.
\end{equation}
Notice that $h(r)/r \rightarrow 1$ as $ r \rightarrow \infty$. Now, for $R>|x|>r$,  let
\[
\tau_{r,R}= \inf\{ t>0: X_t^x\not\in (r,R)\}.
\]
Taking expectations of the martingale $h(|X^x_{t \wedge \tau_{r,R}}|)$, we obtain,
\begin{equation}
\label{exitprob}
{\mathbf P}( |X^x_{\tau_{r,R}}|=R)= \frac{h(|x|)-h( r )}{h( R )-h( r )}.
\end{equation}

Now note that  for each $x$, $u(x)$ varies continuously with $g\in C(S^{d-1})$. In fact there exist constants $K_R$ so that
\begin{equation}
\label{cts}
\sup_{|x| \leq R} |u(x)| \leq K_R \sup_{\theta \in S^{d-1}} |g(\theta)|
\end{equation}
as can be seen by estimating the terms in the series \eqref{series} using Lemma \ref{sol}.
Consequently  it is enough to prove the  inequality for $g$ belonging to the dense subset consisting of  $g\in C(S^{d-1})$ formed of finite linear combinations of spherical harmonics with $\int_{S^{d-1}} g d\theta=0$. Fix such a $g$ and let $u$ be the corresponding solution  of ${\mathcal A} u =0$. Considering the martingale $u( X^x_{t \wedge \tau_{1,R}})$, where $1<|x|<R$, we obtain 
\[
u(x)= {\mathbf E} [ u( X^x_{ \tau_{1,R}})],
\]
whence, using \eqref{exitprob},
\begin{equation}
\label{estimate}
|u(x)|  \leq \sup_{|y|=1} |u(y)|+  \frac{h(|x|)}{h( R )}\sup_{|y|=R} |u(y)|.
\end{equation}
Recall that as we have observed previously since $u$ is formed from a  finite linear combination  of spherical harmonics,
\[
\lim_{r\rightarrow \infty} \sup_{\theta\in S^{d-1}} | u( r\theta)/r - g(\theta)| =0.
\]
  Consequently, letting $R \rightarrow \infty$ in \eqref{estimate} we obtain,
\[
|u(x)|  \leq \sup_{|y|=1} |u(y)|+  h(|x|)\sup_{\theta\in S^{d-1}} | g(\theta)|.
\]
Now we apply the estimate  \eqref{cts} to the first of these terms, and  we deduce the statement of the  lemma holds if $K$ is chosen greater than both $\sup_{r \geq 1} h( r )/r$ and $K_1$.

\end{proof}

It remains to prove the uniqueness assertion of the theorem. This we can do adapting the argument just used in the proof of the lemma.
Suppose that $u_1$ and $u_2$ are two solutions to ${\mathcal A} u=0$  satisfying
\[
\lim_{r\rightarrow \infty} \sup_{\theta\in S^{d-1}} | u_i( r\theta)/r - g(\theta)| =0
\]
for the same choice of $g$.
Then $u= u_1-u_2$ solves ${\mathcal A} u=0$  with 
\begin{equation}
\label{zero}
\lim_{r\rightarrow \infty} \sup_{\theta\in S^{d-1}} | u( r\theta)/r | =0.
\end{equation}
Considering the martingale $u( X^x_{t \wedge \tau_{r,R}})$ we obtain 
\[
u(x)= {\mathbf E} [ u( X^x_{ \tau_{r,R}})],
\]
whence, using \eqref{exitprob},
\begin{equation}
\label{estimate}
|u(x)|  \leq \sup_{|y|=r} |u(y)|+  \frac{h(|x|)-h( r )}{h( R )-h( r )}\sup_{|y|=R} |u(y)|.
\end{equation}
Now letting $R\rightarrow \infty$, holding $r$ fixed, and using \eqref{zero}, gives
\[
|u(x)|  \leq \sup_{|y|=r} |u(y)|
\]
But then letting $ r \downarrow 0$ and noting $u(0)=0$ we deduce $u$ is identically zero.

\section{Proof of Theorem 2}

We now define the semigroup $(P_t; t \geq 0)$ via 
$P_t u (x)= {\mathbf E}[ u(X^x(t))]$ 
whenever 
$u$ is such that the random variable $u(X^x(t))$ is integrable for all $x\in {\mathbf R}^d$.   Recall, in particular, that ${\mathbf E}[|X^x(t)|^2] < \infty$.

Each random map $x \mapsto X^x(t)$ is affine and consequently if $u$ is  a convex function then the random function  $x \mapsto u(X^x(t))$ is convex with probability one also.  Taking expectations we
have, for any $x,y \in {\mathbf R}^d$ and $\alpha\in [0,1]$,
\begin{multline*}
P_tu (\alpha x+(1-\alpha)y)= {\mathbf E}[ u( \alpha X^x(t)+(1-\alpha)X^y(t))]  \\
\leq  {\mathbf E}[ \alpha u(X^x(t))+(1-\alpha) u(X^y(t))]= \alpha P_t u (x)+ (1-\alpha) P_tu (y),
\end{multline*}
and thus $P_t$  preserves convexity.
This will be a key ingredient in the proof of our second theorem. We note in passing that the semigroup of any generalized OU process is convexity preserving.

 Our strategy for  the proof of  Theorem 2 is to study the behaviour of  $P_t v$ as $t \rightarrow \infty$ where $v(x)= |x|g(x/|x|) $. To begin, first note that the  probabilistic analogue  of  \eqref{sphere1}
is the skew-product decomposition for  the diffusion $(X^x(t);t \geq 0 )$:
\begin{equation}
\label{sphere2}
X^x(t)= R^{(r)}(t) \Theta \left( \int_0^t \frac{ds}{R^{(r)}(s)^2 ds} \right)
\end{equation}
where   $R^{(r)}(t)= |X^x(t)|$ is a diffusion on $(0,\infty)$  with generator ${\mathcal A}_R$ starting from $r=|x|\neq 0$, and $(\Theta(t); t \geq 0)$ a Brownian motion on the sphere $S^{d-1}$ starting from $x/|x|$. An elegant argument for establishing this skew-product  is to write  $X^x(t)$ as a time change
\begin{equation}
X^x(t)= e^{B(t)-t/2} \hat{W} \left( \int_0^t e^{-2B(s)+s}ds\right)
\end{equation}
of a $d$-dimensional Brownian motion $\hat{W}$ satisfying $\hat{W}(0)=x$, and then apply the usual 
skew-product decomposition of $d$-dimensional Brownian motion to  $\hat{W}$.

Equations \eqref{radialDE} and \eqref{radialDE0} imply that  the processes
\begin{equation}
f_l\bigl(R^{(r)}(t)\bigr) \exp\left( - \frac{l(l+d-2)}{2} \int_0^t \frac{ds}{R^{(r)}(s)^2} \right)
\end{equation}
for $l\geq 1$,  and,
\begin{equation}
 f_0\bigl(R^{(r)}(t)\bigr) -\gamma_d t
\end{equation}
are local martingales. In fact they are true martingales because $f_l^\prime$ being bounded together with \eqref{sqfinite} implies their quadratic variations are square integrable.

 Now  define    $f_l(t,r)$  by,
\begin{equation}
\label{time-evol}
f_l(t,r)= {\mathbf E}\left[  R^{(r)}(t) \exp\left( - \frac{l(l+d-2)}{2} \int_0^t \frac{ds}{R^{(r)}(s)^2} \right) \right].
\end{equation}

\begin{lemma}
\label{lemma1con}
For $l\geq 1$ we have for all $ r\geq0$,
\[
\lim_{t\rightarrow \infty} f_l(t,r)= f_l(r).
\]
Moreover we have $ f_l(r) \leq f_l(t,r)\leq r $ for all $t\geq 0$ and $l \geq 1$.
The case $l=0$ satisfies
\[
\lim_{t\rightarrow \infty} \bigl(f_0(t,r)-\gamma_d t \bigr)= f_0(r)+\lambda_d,
\]
for all $ r\geq0$, where $\lambda_d$ is a constant not depending on $r$.
\end{lemma}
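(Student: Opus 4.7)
The plan is to treat the cases $l\geq 1$ and $l=0$ separately, using in both the SDE $dR = \sqrt{1+R^2}\,dB + \tfrac{d-1}{2R}\,dt$ for the radial diffusion $R = R^{(r)}$.

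For $l\geq 1$ set $I_l(t) = \exp\bigl(-\tfrac{l(l+d-2)}{2}\int_0^t R(s)^{-2}\,ds\bigr)$. The sandwich $f_l(r)\leq f_l(t,r)\leq r$ and the convergence both follow from comparing the martingale $M_t = f_l(R(t))\,I_l(t)$, which has constant mean $f_l(r)$, with $N_t = R(t)\,I_l(t)$. Ito's formula applied to $N_t$ produces drift $-\tfrac{(l-1)(l+d-1)}{2R(t)}\,I_l(t)\,dt$, which is nonpositive for $l\geq 1$, so $N_t$ is a supermartingale and $f_l(t,r)=\mathbf E N_t\leq r$. For the opposite comparison one needs $f_l\leq\mathrm{id}$ pointwise: setting $\psi_l(r)=r-f_l(r)$ and using $\mathcal A_R r = \tfrac{d-1}{2r}$, a short computation gives $\mathcal A_R\psi_l - \tfrac{l(l+d-2)}{2r^2}\psi_l = -\tfrac{(l-1)(l+d-1)}{2r}\leq 0$; applying Pfaff's transformation to the hypergeometric representation in Lemma~\ref{sol} sharpens $f_l(r)/r\to 1$ into $f_l(r)=r+O(r^{-1})$ as $r\to\infty$, so $\psi_l$ is continuous on $[0,\infty)$ with $\psi_l(0)=0$ and $\psi_l(r)\to 0$ at infinity. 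A standard maximum principle argument---a strictly negative interior minimum $r_0$ would force $\mathcal A_R\psi_l(r_0)\geq 0$ while $-\tfrac{l(l+d-2)}{2r_0^2}\psi_l(r_0)>0$, contradicting the supersolution inequality---yields $\psi_l\geq 0$. Then $M_t\leq N_t$ pointwise, giving $f_l(r)\leq f_l(t,r)$ and completing the sandwich. For the limit, $f_l(t,r)-f_l(r) = \mathbf E[\psi_l(R(t))\,I_l(t)]$; since the radial diffusion is recurrent, $\int_0^\infty R(s)^{-2}\,ds=\infty$ almost surely, so $I_l(t)\to 0$ almost surely, and $\psi_l$ being bounded, bounded convergence gives $f_l(t,r)\to f_l(r)$.

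For $l=0$ the martingale $f_0(R(t))-\gamma_d t$ from \eqref{radialDE0} has mean $f_0(r)$, so $f_0(t,r)-\gamma_d t - f_0(r) = \mathbf E[\psi_0(R(t))]$ with $\psi_0(r)=r-f_0(r)$. Expanding the inner integral in \eqref{exact0} at infinity yields $\psi_0(r)=2\gamma_d\log r + O(1)$, so $\psi_0$ is continuous on $[0,\infty)$ and grows only logarithmically. Convergence in law $R(t)\to R(\infty)$ to the radial invariant measure $\pi$ (from \eqref{coninv}) combined with uniform integrability of $\psi_0(R(t))$---a consequence of the $L^p$-convergence for $p<1$ noted after \eqref{coninv} together with analogous control on negative moments of $R(t)$ available since $d\geq 2$---identifies the limit as $\lambda_d:=\int\psi_0\,d\pi$, independent of $r$.

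The principal obstacle is the $l=0$ case: $\psi_0$ is unbounded, so bounded convergence is inapplicable and one must verify uniform integrability from the available moment information. The $l\geq 1$ sandwich is by contrast essentially automatic once the asymptotic $f_l(r)=r+O(r^{-1})$ has been extracted from the hypergeometric representation.
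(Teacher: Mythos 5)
Your proof follows the paper's architecture quite closely: the martingale $f_l(R(t))I_l(t)$ with constant mean $f_l(r)$, the supermartingale $R(t)I_l(t)$ giving $f_l(t,r)\le r$, recurrence forcing $I_l(t)\to 0$ almost surely, and for $l=0$ the function $\hat f_0(r)=r-f_0(r)$ with logarithmic growth together with convergence to the invariant law and uniform integrability from the $L^p$ bounds, $p<1$. The $l=0$ part and the upper bound $f_l(t,r)\le r$ are essentially the paper's argument (the worry about negative moments of $R(t)$ is superfluous, since $\log(1+r)$ is bounded near $r=0$).

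The genuine problem is the asymptotic $f_l(r)=r+O(r^{-1})$, on which both your maximum-principle proof of $\psi_l\ge 0$ and your bounded-convergence proof of $f_l(t,r)\to f_l(r)$ rest. This asymptotic is false for $l\ge 2$. The connection formula for ${}_2F_1$ at $-\infty$ (equivalently, a more careful treatment of the Euler integral in Lemma \ref{sol}) produces two terms, $(-x)^{-a}$ and $(-x)^{-b}$ with $a=l/2$, $b=(l-1)/2$; after multiplying by $r^l$ these contribute $r$ and a nonzero constant, so that $f_l(r)=r+A_l+O(r^{-1})$ with $A_l\propto\Gamma(-1/2)/\Gamma((l-1)/2)\ne 0$ for $l\ge 2$ (for $l=2$, $d=2$ one gets $A_2=-3/2$), hence $\psi_l(r)\to -A_l\ne 0$. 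Your maximum-principle step needs the infimum of $\psi_l$, if negative, to be attained at an interior point; given $\psi_l(0)=0$ this requires $\liminf_{r\to\infty}\psi_l(r)\ge 0$, i.e.\ knowledge of the sign of $A_l$, which you have not established. (It happens that $A_l\le 0$ because of the factor $\Gamma(-1/2)<0$, so the argument is repairable, but as written it is incomplete.) Similarly, boundedness of $\psi_l$ survives because the limit at infinity is finite, not because it is zero. The paper sidesteps all of this: it deduces $f_l(t,r)\to f_l(r)$ from the crude two-sided bound $(1-\epsilon)f_l(r)-K\le r\le(1+\epsilon)f_l(r)+K$, which uses only $f_l(r)/r\to 1$, and then obtains the lower bound $f_l(r)\le f_l(t,r)$ for free from the monotonicity of $t\mapsto f_l(t,r)$ implied by the supermartingale property, with no pointwise comparison of $f_l$ against the identity needed. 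You should either adopt that sandwich or supply the sign of the constant term $A_l$.
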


\begin{proof}
 Fix $l \geq 1$.  Since $f_l(r)/r \rightarrow 1$ as $ r \rightarrow \infty$, for any $\epsilon>0$ there exists a $K$ so that for all $r \geq 0$,
\[
(1-\epsilon)f_l(r)-K \leq r \leq (1+\epsilon) f_l(r)+K.
\]
Replacing $r$ by $R^{(r)}(t)$, multiplying by $\exp\left( - \frac{l(l+d-2)}{2} \int_0^t \frac{ds}{R^{(r)}(s)^2} \right)$ and taking expectations, we deduce that
\begin{equation}
\label{sand}
(1-\epsilon)f_l(r)-K \delta_l(t,r) \leq f_l(t,r) \leq (1+\epsilon) f_l(r)+K  \delta_l(t,r) ,
\end{equation}
where  $\delta_l(t,r) ={\mathbf E}\left[ \exp\left( - \frac{l(l+d-2)}{2} \int_0^t \frac{ds}{R^{(r)}(s)^2} \right) \right]$. Now the diffusion $X^x(t)$ being recurrent implies that $\int_0^\infty \frac{ds}{R^{(r)}(s)^2} =\infty$ with probability one, and hence $\delta_l(t,r) \rightarrow 0$ as $t \rightarrow \infty$. Thus, in \eqref{sand}, if we let $t \rightarrow \infty$ and then $\epsilon \downarrow 0$, we deduce that $\lim_{t\rightarrow \infty} f_l(t,r)= f_l(r)$ as desired.

For $l\geq 1$ applying It\^{o}'s formula to
\[
R^{(r)}(t) \exp\left( - \frac{l(l+d-2)}{2} \int_0^t \frac{ds}{R^{(r)}(s)^2} \right)
\]
shows this process to a supermartingale, and hence $f_l(t,r)$ is a decreasing function of $t$. This shows that
$ f_l(r) \leq f_l(t,r)\leq   f_l(0,r)=r $.

Set $\hat{f}_0(r)=r-f_0(r)$.  Using \eqref{exact0}, it is easy to check that there exists constants $A$ and $B$ so that
\begin{equation}
\label{logbound}
|\hat{f}_0(r)| \leq A+B \log(1+r).
\end{equation}
Now 
\begin{multline}
\label{loglimit}
{\mathbf E}\bigl[ \hat{f}_0(R^{(r)}(t))\bigr]={\mathbf E}\bigl[ \hat{f}_0(|X^{x}(t)|)\bigr] 
\\
={\mathbf E}\left[ \hat{f}_0\left(\left|x\exp\{B(t)-t/2\} + \int_0^t \exp\{(B(t)-B(s))-(t-s)/2\}dW(s) \right|\right)\right]  \\
={\mathbf E}\left[ \hat{f}_0\left(\left|x\exp\{B(t)-t/2\} + \int_0^t \exp\{B(s)-s/2\}dW(s) \right|\right)\right]  \\
\rightarrow {\mathbf E}\left[ \hat{f}_0\left(\left|\int_0^\infty \exp\{B(s)-s/2\}dW(s) \right|\right)\right].
\end{multline}
This convergence of expectations is justified by the uniform integrability of the random variables which follows from  the bound \eqref{logbound} and the fact that the fact that the convergence at \eqref{coninv} occurs in $L^p$ for any $0<p<1$.  Now define the constant $\lambda_d$ to be the value of the limit at \eqref{loglimit}, which doesnt depend on $r$. Then we have
\begin{multline}
f_0(t,r)= {\mathbf E}\bigl[ R^{(r)}(t)\bigr]= {\mathbf E}\bigl[ f_0(R^{(r)}(t))+ \hat{f}_0(R^{(r)}(t))\bigr] \\
=f_0(r)+\gamma_dt +{\mathbf E}\bigl[ \hat{f}_0(R^{(r)}(t))\bigr] \rightarrow f_0(r)+\gamma_dt+\lambda_d.
\end{multline} 

\end{proof}

In the following lemma we establish the convergence of  (a shift of) $P_tv$ to the solution $u$ of the elliptic equation. We expect that this convergence to be  locally uniform, but its enough for our purposes to prove it in a weaker $L^2$ sense. 

\begin{lemma}
\label{l2con}
Suppose that $ g \in C(S^{d-1})$ and let   $c=\gamma_d \int g(\theta) d\theta$, and  $b=\lambda_d \int g(\theta) d\theta.$  Let $v(x)= |x|g(x/|x|) $ for $ x \in {\mathbf R}^d$ and 
let $u$ be the solution of \eqref{pde} corresponding to $g$. 
Then, as $t\rightarrow \infty$,
\[
\int_{S^{d-1}}(P_t v (r\theta)  - u(r\theta) -ct-b )^2 d \theta \rightarrow 0,
\]
for every $r>0$.
\end{lemma}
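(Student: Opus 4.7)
The plan is to expand both $P_t v(r\theta)$ and $u(r\theta)$ in spherical harmonics with respect to a common orthonormal basis $\{Y_{l,m}\}$ of $L^2(S^{d-1})$, so that the integral to be estimated splits by Parseval into a sum over $(l,m)$ whose summands are controlled term-by-term by Lemma \ref{lemma1con}.

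Write $g = \sum_{l,m} \hat{g}_{l,m} Y_{l,m}$ in $L^2(S^{d-1})$. Since $f_l(r) g_l$ is by construction the degree-$l$ spherical-harmonic component of $u(r\cdot)$, the Fourier coefficients of $u(r\cdot)$ are $\hat{g}_{l,m} f_l(r)$. The key step is the analogous identification for $P_t v(r\cdot)$: I claim its Fourier coefficients are exactly $\hat{g}_{l,m} f_l(t,r)$. That $P_t v(r\cdot) \in L^2(S^{d-1})$ follows from $|v(y)|\leq \|g\|_\infty |y|$ and \eqref{sqfinite}. To compute the coefficients, invoke the skew-product \eqref{sphere2}, so that
\[
P_t v(r\theta) = {\mathbf E}\bigl[ R^{(r)}(t) \, g(\Theta(\tau_t))\bigr], \qquad \tau_t = \int_0^t \frac{ds}{R^{(r)}(s)^2},
\]
where, conditional on $R^{(r)}$, the process $\Theta$ is a spherical Brownian motion starting from $\theta$ whose semigroup diagonalizes $Y_{l,m}$ with eigenvalue $e^{-l(l+d-2)\tau_t/2}$. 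Fubini and the self-adjointness of the spherical Brownian semigroup then yield
\[
\int_{S^{d-1}} P_t v(r\theta) Y_{l,m}(\theta) \, d\theta = \hat{g}_{l,m} \, {\mathbf E}\!\left[ R^{(r)}(t) \exp\!\left(-\tfrac{l(l+d-2)}{2} \int_0^t \tfrac{ds}{R^{(r)}(s)^2}\right)\right] = \hat{g}_{l,m} f_l(t,r),
\]
for $l\geq 1$, with the $l=0$ case reducing to $\hat{g}_{0,0} f_0(t,r)$ since $\hat{g}_{0,0} = \int g \,d\theta = c/\gamma_d$ under the chosen normalization of surface measure.

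The constant $ct + b$ has only an $l=0$ component, equal to $(c/\gamma_d)(\gamma_d t + \lambda_d) = \hat{g}_{0,0}(\gamma_d t + \lambda_d)$. Consequently, by Parseval,
\[
\int_{S^{d-1}} (P_t v(r\theta) - u(r\theta) - ct - b)^2 d\theta = \hat{g}_{0,0}^2 \bigl( f_0(t,r) - f_0(r) - \gamma_d t - \lambda_d\bigr)^2 + \sum_{l\geq 1, m} \hat{g}_{l,m}^2 \bigl( f_l(t,r) - f_l(r) \bigr)^2.
\]
The $l=0$ summand tends to zero by the last assertion of Lemma \ref{lemma1con}.

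For the remaining sum, each individual term tends to zero by the $l \geq 1$ part of Lemma \ref{lemma1con}, while the same lemma gives the uniform bound $0 \leq f_l(t,r) - f_l(r) \leq r$. Thus the summands are dominated by $r^2 \hat{g}_{l,m}^2$, and $\sum_{l\geq 1, m} r^2 \hat{g}_{l,m}^2 \leq r^2 \|g\|_2^2 < \infty$, so dominated convergence on the discrete index $(l,m)$ forces the sum to zero as $t \to \infty$. The main obstacle, and really the only nontrivial step, is the identification of the Fourier coefficients of $P_t v(r\cdot)$ with $\hat{g}_{l,m} f_l(t,r)$; once the skew-product and the self-adjointness of the spherical semigroup are invoked, the rest of the argument is bookkeeping.
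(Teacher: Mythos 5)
Your proposal is correct and follows essentially the same route as the paper: expand $P_t v(r\cdot)$ and $u(r\cdot)$ in spherical harmonics, apply Parseval, and conclude via Lemma \ref{lemma1con} together with dominated convergence over the discrete index, using the bound $0\leq f_l(t,r)-f_l(r)\leq r$. The only (harmless) difference is in justifying the coefficient identity $\int_{S^{d-1}}P_t v(r\theta)Y_{l,m}(\theta)\,d\theta=\hat g_{l,m}f_l(t,r)$: you compute it directly from the skew-product via Fubini and self-adjointness of the spherical heat semigroup, whereas the paper verifies the expansion first for finite linear combinations of spherical harmonics and then extends by continuity and density.
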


\begin{proof}
Letting $g_l$ be the projection of $g$ into the subspace of spherical harmonics of degree $l$ as usual, we claim we can expand   
$P_tv$  as a series,
\begin{equation}
\label{time-series}
P_tv (r\theta) = \sum_{l=0}^\infty f_l(t, r) g_l(\theta), 
\end{equation}
with the series converging in   $L^2(S^{d-1}( r ))$ for each $r>0$. This convergence is guaranteed by the inequality  $0\leq f_l(t, r) \leq r$.

To verify the claim
that \eqref{time-series} is valid, first note it holds for $g$  that are a finite linear combination of spherical harmonics, by  virtue of the skew product \eqref{sphere2}, the fact that $g_l$ is an eigenfunction of the Laplacian on the sphere, and the definition \eqref{time-evol} of $f_l(t,r)$. Now consider, for a fixed $r>0$ and $t>0$, the applications,
\[
g \in C(S^{d-1}) \mapsto  P_tv (r\cdot) \in L^2(S^{d-1}),
\]
and 
 \[
g \in C(S^{d-1}) \mapsto  \sum_{l=0}^\infty f_l(t,r) g_l(\cdot)  \in L^2(S^{d-1}).
\]
Both are continous (equipping  $C(S^{d-1})$ with the uniform norm) and they agree on the dense subspace of finite linear combinations of spherical harmonics. Thus \eqref{time-series} holds for any $g\in C(S^{d-1})$.

With the help of \eqref{time-series} we can now compute, noting $g_0= \int_{S^{d-1}} g(\theta)d\theta$,
\begin{multline*}
\int_{S^{d-1}}(P_t v (r\theta)  - u(r\theta) -ct-b )^2  \; d\theta
\\
= ( f_0(t,r)g_0-f_0(r)g_0-ct-b)^2+ \sum_{l=1}^\infty ( f_l(t,r)-f_l(r))^2 ||g_l||^2_{S^{d-1}},
\end{multline*}
which tends to $0$ as $t \rightarrow \infty$ using Lemma \ref{lemma1con} and the Dominated Convergence Theorem.
\end{proof}

\begin{proof}[Proof of Theorem 2]
Recall that $v$ being convex implies that $P_t v $ is convex also for every $t \geq 0$. Because $L^2$ convergence implies almost everywhere convergence along some subsequence, it follows from Lemma \ref{l2con} that ,for all but  a null set of $x,y \in {\mathbf R}^d$ and $ \alpha \in [0,1]$, 
\[
u(\alpha x+ (1-\alpha)y) \leq  \alpha u(x)+ (1-\alpha)u(y).
\]
But $u$ is continuous so this inequality extends to all  $x,y \in {\mathbf R}^d$ and $ \alpha \in [0,1]$.

To prove  the converse implication, consider arbitrary $x,y \in {\mathbf R}^d\setminus\{0\}$ and $\alpha \in [0,1]$ with $ \alpha x +(1-\alpha)y \neq 0$. Then $u$ being conxex implies that, for every $r>0$,
\[
\alpha u(rx)+ (1-\alpha) u(ry) \geq u( \alpha rx+(1-\alpha)ry).
\]
Dividing through by $r$, and then letting $ r \rightarrow \infty$, we obtain from \eqref{boundary} that
\[
\alpha |x| g(x/|x|)+ (1-\alpha) |y| g(y/|y|) \geq |\alpha x+(1-\alpha)y| g\left( \frac{ \alpha x+(1-\alpha)y}{| \alpha x+(1-\alpha)y|}\right)
\]
which in view of the definition of $v$ implies that it is convex.

\end{proof}

\end{document}